\title{Derivations of octonion matrix algebras}
\author{Harry Petyt}
\def\subsection{\@startsection{subsection}{1}\z@{.7\linespacing\@plus\linespacing}
    {.5\linespacing}{\normalfont\scshape\centering}}\makeatother 
\newcounter{claimcount}
\newcounter{thmcount}
\newtheorem{theorem}{Theorem}[section]
\newtheorem{lemma}[theorem]{Lemma}
\newtheorem{proposition}[theorem]{Proposition}
\newtheorem*{theorem*}{Theorem}
\theoremstyle{definition}
\newtheorem{question}[theorem]{Question}
\newenvironment{claim}[1]{\stepcounter{claimcount}\par\noindent\underline{Claim \theclaimcount:}\space#1}{}
\newenvironment{claimproof}[1]{\par\noindent\underline{Proof:}\space#1}
    {\leavevmode\unskip\penalty9999\hbox{}\nobreak\hfill\quad\hbox{$\diamondsuit$}\vspace{2mm}}
\newcommand{\showcomments}{yes}
\newsavebox{\commentbox}
\renewcommand{\bar}{\overline}
\newcommand*{\mfa}{\mathfrak{a}}
\DeclareMathOperator{\adjoint}{ad}
\newcommand*{\C}{\mathbb{C}}
\DeclareMathOperator{\der}{der}
\newcommand*{\F}{\mathbb{F}}
\newcommand*{\mfg}{\mathfrak{g}_2}
\newcommand*{\HH}{\mathbb{H}}
\newcommand*{\mfh}{\mathfrak{h}}
\DeclareMathOperator{\imag}{Im}
\newcommand*{\OO}{\mathbb{O}}
\DeclareMathOperator{\pure}{Pu}
\newcommand*{\R}{\mathbb{R}}
\DeclareMathOperator{\real}{Re}
\newcommand{\vcc}{\hspace{1mm}\vcentcolon\hspace{1mm}}
\newcommand{\ignore}[2]{\left\{\kern-.7ex\left\{#1\right\}\kern-.7ex\right\}_{#2}}
\tikzset{symbol/.style={draw=none,every to/.append style={edge node={node [sloped, allow upside down, auto=false]{$#1$}}}}}
\begin{document}

\maketitle

\begin{abstract}
It is well-known that the exceptional Lie algebras $\mathfrak{f}_4$ and $\mathfrak{g}_2$ arise from the octonions as the derivation algebras of the $3\times3$ hermitian and $1\times1$ antihermitian matrices, respectively. Inspired by this, we compute the derivation algebras of the spaces of hermitian and antihermitian matrices over an octonion algebra in all dimensions. 
\end{abstract}

\section{Introduction}

In \cite{benkartosborn:derivations}, Benkart and Osborn calculate the derivation algebra for the algebra of $n\times n$ matrices with entries in an arbitrary unital algebra, under the standard matrix product, the commutator product, and the anticommutator product. In the case that the unital algebra is an octonion algebra over a field $\F$, their results show that for both the standard product and the anticommutator, the derivation algebra is $\mfg\oplus\mathfrak{gl}_n(\F)$; while in the commutator case it is the direct sum of this with $\F$.

The exceptional Lie algebra $\mathfrak{f}_4$ can be constructed as the derivation algebra of the exceptional Jordan algebra, which is the set of $3\times3$ hermitian matrices with entries in an octonion algebra, under the anticommutator product. If we increase the size of these matrices then we lose the Jordan property but still get well defined algebras. It is then natural to ask what the corresponding derivation algebras are, and to do the same for antihermitian (or skew-hermitian) matrices. When the characteristic of $\F$ is not two, our answers are:
\begin{theorem*}
If $n\geq4$ then $\der(\mfh_n(\OO))=\mfg\oplus\mathfrak{so}_n(\F)$. 
\end{theorem*}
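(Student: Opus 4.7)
The plan is to prove the two inclusions separately. The forward inclusion $\mfg \oplus \mathfrak{so}_n(\F) \subseteq \der(\mfh_n(\OO))$ is routine: entrywise derivations from $\mfg = \der(\OO)$ preserve hermiticity and satisfy the Leibniz rule for the anticommutator, and for $X \in \mathfrak{so}_n(\F)$ the map $\adjoint_X \colon A \mapsto XA - AX$ preserves hermiticity because $X^\top = -X$, and is a derivation of $\circ$ because the scalar entries of $X$ lie in the nucleus of $\OO$ so the usual associativity-based identity applies. The sum is direct since $\mfg$ fixes the diagonal idempotents $e_i := E_{ii}$ while no nonzero element of $\mathfrak{so}_n(\F)$ does.

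For the reverse inclusion, let $D \in \der(\mfh_n(\OO))$ and use the Peirce decomposition $\mfh_n(\OO) = \bigoplus_i \F e_i \oplus \bigoplus_{i<j} V_{ij}$, where $V_{ij} = \{a[i,j] := aE_{ij} + \bar a E_{ji} : a \in \OO\}$. Applying $D$ to $e_i \circ e_i = 2 e_i$ and $e_i \circ e_j = 0$ and comparing eigenspaces of the operators $e_k \circ \cdot$ (with eigenvalues $0,1,2$) forces $D(e_i) = \sum_{j\neq i} b^i_j[i,j]$ with $b^i_j + b^j_i = 0$ and, \emph{a priori}, $b^i_j \in \OO$. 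The first key step is to promote this to $b^i_j \in \F$: applying $D$ to the triangle identity $a[i,j] \circ b[j,k] = (ab)[i,k]$ (requiring distinct $i,j,k$) and reading off the $V_{jj}$-component of the resulting Leibniz expression gives a relation that, specialised to $b=1$ and combined with non-degeneracy of the norm form on $\OO$, forces $b^i_k = \overline{b^i_k}$, i.e.\ $b^i_k \in \F$.

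Once the $b^i_j$ are scalars, the matrix $X \in \mathfrak{so}_n(\F)$ with $X_{ij} = -b^i_j$ satisfies $\adjoint_X(e_i) = D(e_i)$, so replacing $D$ by $D - \adjoint_X$ I may assume $D(e_i) = 0$ for every $i$. A further Peirce analysis then shows $D(V_{ij}) \subseteq V_{ij}$, so I can write $D(a[i,j]) = D_{ij}(a)[i,j]$ for some $\F$-linear $D_{ij} \colon \OO \to \OO$. The triangle identity now yields the derivation-like relation $D_{ij}(a)\, b + a\, D_{jk}(b) = D_{ik}(ab)$ for all $a,b \in \OO$ and distinct $i,j,k$. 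Setting $\lambda_{ij} := D_{ij}(1)$ and using the conjugation symmetry $D_{ji}(\bar a) = \overline{D_{ij}(a)}$ yields $\lambda_{ij} \in \imag(\OO)$ together with the cocycle relation $\lambda_{ij} + \lambda_{jk} = \lambda_{ik}$.

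This is where the hypothesis $n \geq 4$ becomes essential. Choosing a fourth distinct index $l$ and equating the two expansions of $D_{ik}(ab)$ through middle indices $j$ and $l$ yields $(a\lambda_{jl})\, b = a\, (\lambda_{jl} b)$ for all $a,b \in \OO$, forcing $\lambda_{jl}$ into the nucleus of $\OO$, which is $\F$. Combined with $\lambda_{jl} \in \imag(\OO)$ and $\mathrm{char}(\F) \neq 2$ this gives $\lambda_{ij} = 0$, so the $D_{ij}$ all coincide with a single $\F$-linear map $D_0 \colon \OO \to \OO$ obeying the Leibniz rule on $\OO$, i.e.\ $D_0 \in \mfg$. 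Since $D_0$ acts on $\mfh_n(\OO)$ by its entrywise extension, $D = \adjoint_X + D_0$ lies in $\mathfrak{so}_n(\F) \oplus \mfg$. The main obstacles are the delicate $V_{jj}$-bookkeeping forcing $b^i_j \in \F$ and the nuclear argument forcing $\lambda_{ij} \in \F$, both of which depend on careful use of the non-associative product; the failure of these arguments when $n=3$ — where no second triangle sharing endpoints is available — is precisely the mechanism allowing imaginary $\lambda_{ij}$ to survive and produce the extra generators of $\mathfrak{f}_4$.
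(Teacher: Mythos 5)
Your overall architecture matches the paper's (peel off $\adjoint_X$ with $X\in\mathfrak{so}_n(\F)$ built from the diagonal data, then show that a derivation killing every $E_{ii}$ acts entrywise by an element of $\mfg$), and your second half is sound: the cocycle/nucleus argument with $\lambda_{ij}=D_{ij}(1)$ and a fourth index is a legitimate, triality-free alternative to the paper's appeal to local triality. The genuine gap is in your ``first key step'', the claim that $b^i_j\in\F$. First, the relation coming from $e_i\circ e_j=0$ is not $b^i_j+b^j_i=0$ but the conjugate antisymmetry $b^i_j+\overline{b^j_i}=0$, because the $(j,i)$ entry of $b^i_j[i,j]$ is $\overline{b^i_j}$. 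Second, the $V_{jj}$-component of the differentiated triangle identity $a[i,j]\circ b[j,k]=(ab)[i,k]$ gives, after writing $\mu=b^i_k$ and substituting $b^k_i=-\bar\mu$, exactly $(\bar a\mu)\bar b-\bar a(\mu\bar b)+b(\bar\mu a)-(b\bar\mu)a=0$, which is the vanishing of the real part of an associator and hence holds automatically in any octonion algebra; specialised to $b=1$ it reduces to the conjugate antisymmetry you already have. So this step produces no new constraint, and your conclusion $b^i_k=\overline{b^i_k}$ really comes from combining the (incorrect) plain antisymmetry with the (correct) conjugate one.

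Moreover, no argument using only a single triangle of indices can force $b^i_j\in\F$: if it did, it would apply verbatim when $n=3$, where $\der(\mfh_3(\OO))=\mathfrak{f}_4$ has dimension $52$ and contains, for instance, the Jordan derivation $[L_{E_{11}},L_{zE_{12}+\bar zE_{21}}]$, which sends $E_{22}$ to $zE_{12}+\bar zE_{21}$ with $z$ an arbitrary octonion. The hypothesis $n\geq4$ is therefore needed already at this stage, and this is precisely how the paper uses it: differentiating $a[i,j]\circ b[j,k]=(ab)[i,k]$ and reading off the $(i,t)$ entry for a fourth index $t\neq i,j,k$ yields $(ab)\mu^k_{kt}=a(b\mu^k_{kt})$, so the diagonal data lies in the nucleus of $\OO$, which is $\F$. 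With that substitute your proof goes through; note also that $\lambda_{ij}\in\imag(\OO)$ should be justified by combining $\lambda_{ji}=\overline{\lambda_{ij}}$ with the cocycle relation (or by differentiating \eqref{hijij}), since conjugation symmetry alone does not give it.
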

\begin{theorem*}
$\der(\mfa_n(\OO))=\mfg\oplus\mathfrak{so}_n(\F)$ for all natural numbers $n$.
\end{theorem*}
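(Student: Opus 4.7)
The plan is to establish the two inclusions separately, using representation-theoretic modifications on the harder side.

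\textbf{Forward inclusion.} I will first verify that $\mfg=\der(\OO)$ embeds in $\der(\mfa_n(\OO))$ via its entrywise action: each $d\in\mfg$ annihilates $\F$, preserves $\pure(\OO)$, and commutes with octonion conjugation, so the entrywise action preserves $\mfa_n(\OO)$ and inherits the derivation property entry by entry. Second, for $X\in\mathfrak{so}_n(\F)\subseteq\mfa_n(\OO)$ the operator $\adjoint_X$ preserves $\mfa_n(\OO)$ and satisfies the Jacobi identity $[X,[A,B]]=[[X,A],B]+[A,[X,B]]$ in spite of the non-associativity of $M_n(\OO)$, since $X$ has $\F$-entries and therefore associates with any matrices over $\OO$. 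The two actions commute (as $\mfg$ kills $\F$) and intersect trivially, giving $\mfg\oplus\mathfrak{so}_n(\F)\subseteq\der(\mfa_n(\OO))$.

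\textbf{Reverse inclusion.} Given $D\in\der(\mfa_n(\OO))$, I will decompose $\mfa_n(\OO)=\mathfrak{so}_n(\F)\oplus(\pure(\OO)\otimes\mathrm{Sym}_n(\F))$ as $\mathfrak{so}_n(\F)$-modules, the commutator action being a genuine Lie representation by the same centrality argument. For $n\geq3$, Whitehead's first lemma applied to the $1$-cocycle $D|_{\mathfrak{so}_n(\F)}$ produces $v\in\mfa_n(\OO)$ with $D(X)=[X,v]$ on $\mathfrak{so}_n(\F)$; splitting $v=v_1+v_2$ along the decomposition and subtracting the (legal) derivation $\adjoint_{-v_1}$ reduces to the case $D(X)=[X,v_2]$ with $v_2\in\pure(\OO)\otimes\mathrm{Sym}_n(\F)$. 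The main obstacle will be to force $v_2=0$: because $\adjoint_{v_2}$ is not a derivation in general, I cannot simply subtract it, and must instead extract the constraint from the derivation identity $D[X,A]=[DX,A]+[X,DA]$ applied with $A\in\pure(\OO)\otimes\mathrm{Sym}_n(\F)$, using the explicit decomposition
\[
[\alpha M,\beta N]=\tfrac{1}{2}\{\alpha,\beta\}[M,N]+\tfrac{1}{2}[\alpha,\beta]\{M,N\}
\]
for $\alpha,\beta\in\pure(\OO)$ and $M,N$ symmetric, whose two summands lie in $\mathfrak{so}_n(\F)$ and $\pure(\OO)\otimes\mathrm{Sym}_n(\F)$ respectively, to solve for $v_2$.

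\textbf{Endgame and small cases.} Once $D|_{\mathfrak{so}_n(\F)}=0$, the derivation $D$ will be $\mathfrak{so}_n(\F)$-equivariant and so preserve the isotypic decomposition of $\mfa_n(\OO)$; in particular it will preserve the subalgebra $\pure(\OO)\cdot I_n$, on which it restricts to a derivation of the octonion commutator algebra, hence agrees with the entrywise action of some $d\in\mfg$. After subtracting that, a Schur-type argument using irreducibility of the trace-free piece $\mathrm{Sym}_n^0(\F)$ as an $\mathfrak{so}_n(\F)$-module will force the residual derivation to vanish on $\pure(\OO)\otimes\mathrm{Sym}_n^0(\F)$. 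The cases $n=1,2$ will need separate treatment: $n=1$ reduces to the classical $\der(\pure(\OO))=\mfg$, and $n=2$ requires direct computation in the basis $E_{11},E_{22},E_{12}+E_{21}$ since $\mathfrak{so}_2(\F)$ is abelian and neither Whitehead's lemma nor the irreducibility Schur argument is applicable.
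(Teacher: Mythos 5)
Your forward inclusion is fine (it is essentially the paper's Lemma on the $\mfg\oplus\mathfrak{so}_n(\F)$ subalgebra), but the reverse inclusion as planned has a genuine gap: it proves at best a characteristic-zero statement, while the theorem is asserted for every field of characteristic not two. Whitehead's first lemma, the complete reducibility needed to speak of isotypic components, and the Schur argument for $\pure(\OO)\otimes\mathrm{Sym}^0_n(\F)$ all require characteristic zero (and even the splitting $\mathrm{Sym}_n(\F)=\F I_n\oplus\mathrm{Sym}^0_n(\F)$ fails when the characteristic divides $n$, since then $I_n$ is trace-free). In positive characteristic $H^1(\mathfrak{so}_n(\F),\mfa_n(\OO))$ need not vanish and your reduction of $D|_{\mathfrak{so}_n(\F)}$ to an inner cocycle is unjustified. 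The paper avoids all of this: it applies $\partial$ to the explicit products of the matrices $e_iE_{tt}$ and $zE_{tr}-\bar z E_{rt}$, uses the Moufang identity and the nucleus of $\OO$ to pin down the component maps, and bounds $\dim\der(\mfa_n(\OO))$ by $14+\tfrac{n(n-1)}{2}$ — an argument valid verbatim whenever $\operatorname{char}\F\neq2$ and for every $n\geq1$.

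Even granting characteristic zero, two essential steps are only announced, not carried out. First, the elimination of $v_2$ is the actual heart of the proof and is left as "extract the constraint \ldots to solve for $v_2$"; note also that the correct target is not $v_2=0$ but $[X,v_2]=0$ for all $X\in\mathfrak{so}_n(\F)$, i.e.\ $v_2\in\pure(\OO)\,I_n$ (the centralizer), and establishing this from the derivation identity is precisely the kind of componentwise computation the paper performs in its Claims 1 and 2 (showing the off-diagonal maps $a^t_{tr}$ are scalars $\lambda^t_{tr}\in\F$ and that the diagonal data come from a single $\beta\in\mfg$). Second, the case $n=2$, where $\mathfrak{so}_2(\F)$ is abelian and both Whitehead and Schur are unavailable, is deferred to a "direct computation" that is not done; the paper's single argument covers $n=1,2$ together with all larger $n$. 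As it stands, then, your proposal is a plausible outline for $n\geq3$ over characteristic zero, but it does not yet prove the stated theorem.
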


This is strongly reminiscent of Benkart and Osborn's results, so since every matrix decomposes as the sum of a hermitian matrix with an antihermitian matrix, one might hope to use their methods. In practice, however, many of the tools they use break down in our case. This is mostly because, for us, entries on the diagonal come from a subspace of the octonion algebra.

I would like to thank Dmitriy Rumynin for many useful discussions, and without whom the present work would not have been possible. I would also like to thank Ivan Shestakov for valuable information.

\section{Set-up}

Let $\mathbb{F}$ be a field of characteristic not two, and let $\OO$ be an octonion algebra over $\F$. That is, $\OO$ is a unital, alternative, 8-dimensional $\F$-algebra with a nondegenerate quadratic form $|\cdot|^2\vcentcolon\OO\rightarrow\F$ which is multiplicative in the sense that $|zw|^2=|z|^2|w|^2$. We call elements of $\OO$ \emph{octonions}. The nucleus of $\OO$ is $\F$. More than being alternative, the nonzero octonions form a Moufang loop under multiplication. In particular they satisfy the left Moufang law
\begin{align}
z(w(zu))=(zwz)u. \label{moufanglaw}
\end{align}

We distinguish two cases for $\OO$. We say that $\OO$ is \emph{Type~I} if it has an orthonormal basis $1,e_1,\dots,e_7$ such that $e_i^2=-1$ for all $i$, and \emph{Type~II} otherwise. From work of Jacobson \cite[Sect. 3]{jacobson:composition}, if $\OO$ is Type~II then it is split and has an orthonormal basis $1,e_1,\dots,e_7$ such that \hspace{1mm} i) $e_i^2=-1$ for $i\leq3$ and $e_i^2=1$ otherwise; and \hspace{1mm} ii) the $\F$-span of $1,e_1,e_2,e_3$ is isomorphic to $_\F\HH$, the quaternions over $\F$. These basis elements anticommute, and consequently 
\begin{align}
e_ie_je_i=-e_ie_ie_j=\pm e_j. \label{bimultiplication}
\end{align} 
Note that being Type~I does not mean that $\OO$ is a division algebra -- consider $\F=\C$, for example (for a classification of octonion algebras, see \cite{serre:cohomologie}). In any case, the algebra $\der(\OO)$ is simple of type $\mfg$ \cite[Thm. 6]{jacobson:composition}.

We denote conjugation in $\OO$ by a bar: if $z=z_0+\sum_{i=1}^7z_ie_i$ then $\bar{z}=z_0-\sum_{i=1}^7z_ie_i$. We write $\real(z)=z_0$ and call it the \emph{real part} of $z$, even when the base field is not $\R$. Likewise, we call $\imag(z)=\sum_{i=1}^7z_ie_i$ the imaginary part of $z$, and we denote the set of all such purely imaginary octonions by $\pure(\OO)$. For more on octonions, see \cite{baez:octonions,conwaysmith:onquaternions,springerveldkamp:octonions}.

We are interested in certain spaces of matrices with entries in $\OO$. Such a matrix $x$ has a conjugate, $x^*$, which is obtained from $x$ by taking the transpose and conjugating all the entries. If $x^*=x$ then we call it \emph{hermitian}, and we denote the set of hermitian $n\times n$ matrices with entries in $\OO$ by $\mfh_n(\OO)$. The anticommutator product $x\circ y=xy+yx$ makes $\mfh_n(\OO)$ into an $\F$-algebra. Similarly, if $x^*=-x$ then we say $x$ is \emph{antihermitian}, and we write $\mfa_n(\OO)$ for the set of antihermitian matrices, which is made into an $\F$-algebra by the commutator product $[x,y]=xy-yx$. We write $E_{ij}$ for the matrix with a 1 in the $ij^\mathrm{th}$ place and zeros everywhere else.

Our aim is to calculate the derivation algebras $\der(\mfh_n(\OO))$ and $\der(\mfa_n(\OO))$. Note that $\mfh_1(\OO)=\F$ and $\mfa_1(\OO)=\pure(\OO)$, so $\der(\mfh_1(\OO))=0$ and $\der(\mfa_1(\OO))=\mfg$. 

Let $\mathfrak{so}_n(\F)$ be the algebra of antisymmetric $n\times n$ matrices with entries in $\F$ under the commutator product. We use this to state an important lemma.

\begin{lemma} \label{g2son}
If $n>1$ then both $\der(\mfh_n(\OO))$ and $\der(\mfa_n(\OO))$ have a subalgebra isomorphic to $\mfg\oplus\mathfrak{so}_n(\F)$.
\end{lemma}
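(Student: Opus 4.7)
The plan is to exhibit two explicit sources of derivations and check they assemble into an injective Lie algebra map $\mfg\oplus\mathfrak{so}_n(\F)$ into $\der(\mfh_n(\OO))$ and $\der(\mfa_n(\OO))$. The first source is the entrywise action of $\mfg=\der(\OO)$: for $D\in\mfg$ set $\widetilde{D}(x)=(D(x_{ij}))$. Standard manipulations give $D|_\F=0$ (from $D(1)=0$) and $D(\pure(\OO))\subseteq\pure(\OO)$ (from $D(u^2)=D(u)u+uD(u)=0$ for $u\in\pure(\OO)$), whence $D(\bar z)=\overline{D(z)}$. Thus $\widetilde{D}$ preserves both $\mfh_n(\OO)$ and $\mfa_n(\OO)$, and the entrywise Leibniz rule gives $\widetilde{D}(xy)=\widetilde{D}(x)y+x\widetilde{D}(y)$, so $\widetilde{D}$ is a derivation for both $\circ$ and $[\,,\,]$.

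The second source is $\mathrm{ad}_s(x)=sx-xs$ for $s\in\mathfrak{so}_n(\F)$. Antisymmetry $s^*=-s$ makes $\mathrm{ad}_s$ preserve both $\mfh_n(\OO)$ and $\mfa_n(\OO)$. The key point is that the entries of $s$ lie in the nucleus $\F$ of $\OO$, so for any octonion matrices $A,B$ we have the three nuclear identities $(sA)B=s(AB)$, $(As)B=A(sB)$, and $(AB)s=A(Bs)$. These are precisely what is needed to rerun the usual associative-case proofs that $\mathrm{ad}_s$ is a derivation of both products, that $[\widetilde{D},\mathrm{ad}_s]=0$ (using also $D|_\F=0$), and that $[\mathrm{ad}_s,\mathrm{ad}_t]=\mathrm{ad}_{[s,t]}$; together with the obvious $[\widetilde{D_1},\widetilde{D_2}]=\widetilde{[D_1,D_2]}$, this makes $(D,s)\mapsto\widetilde{D}+\mathrm{ad}_s$ a Lie algebra homomorphism into each of the two derivation algebras.

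For injectivity, suppose $\widetilde{D}+\mathrm{ad}_s=0$. Apply this to matrices with $\F$-entries, on which $\widetilde{D}$ vanishes: testing against $E_{ii}$ in the hermitian case, or against $E_{ij}-E_{ji}$ with $i\neq j$ in the antihermitian case (here we use $n>1$), a short matrix computation shows that $\mathrm{ad}_s$ vanishing on these already forces $s=0$. With $s=0$, applying $\widetilde{D}$ to $zE_{ij}+\bar zE_{ji}$ (hermitian, $i\neq j$, $z\in\OO$) or to $zE_{ii}$ (antihermitian, $z\in\pure(\OO)$) then forces $D(z)=0$ for all relevant $z$, so $D=0$.

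The main obstacle is really just a bookkeeping one: octonion non-associativity disallows almost all rearrangements of matrix products, and at each step of the above one must identify the nuclear ingredient---either an $\F$-entry of $s$, or a $\pm1$ entry of some $E_{ij}$---that legitimates the manipulation. No single step is hard in isolation; the work is in being careful.
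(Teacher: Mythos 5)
Your construction is the same as the paper's: $\mfg$ acts entrywise, $\mathfrak{so}_n(\F)$ acts by the adjoint map, nuclearity of the $\F$-entries of $s$ is what makes $\adjoint_s$ a derivation of both products, and the two actions commute, giving the embedding of $\mfg\oplus\mathfrak{so}_n(\F)$. The one genuine slip is in your injectivity check for the antihermitian case at $n=2$. There the only antihermitian matrices with entries in $\F$ are the scalar multiples of $E_{12}-E_{21}$, and for $s=a(E_{12}-E_{21})\in\mathfrak{so}_2(\F)$ one has $s(E_{12}-E_{21})=(E_{12}-E_{21})s=-a(E_{11}+E_{22})$, so $\adjoint_s(E_{12}-E_{21})=0$ for every such $s$; hence vanishing on your test elements does not force $s=0$. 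More generally, vanishing on all $E_{ij}-E_{ji}$ only says $s$ centralises $\mathfrak{so}_n(\F)$, which pins down $s=0$ for $n\geq3$ but not for $n=2$. The repair is immediate: apply $\widetilde{D}+\adjoint_s$ to $zE_{11}$ with $z\in\pure(\OO)$; the diagonal part is $D(z)E_{11}$ while the off-diagonal part is $-az(E_{12}+E_{21})$, so the vanishing of the sum forces $D=0$ and $a=0$. With that patch your argument is complete, and in fact more careful than the paper, which treats the commuting of the two actions and the directness of the sum as routine verifications.
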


\begin{proof}
An element of $\mfg$ gives a derivation by acting on a matrix $x$ entrywise. The action of $\mathfrak{so}_n(\F)$ is the adjoint action: if $A\in\mathfrak{so}_n(\F)$ then $\adjoint_A\vcentcolon x\mapsto[A,x]$ is a derivation because $A$ has entries in $\F$, meaning that $A^*=A^T=-A$. It is easy to check that the two actions commute, whence the direct summation.
\end{proof}

\section{Hermitian}

For hermitian matrices, in dimensions 2 and 3 Jacobson tells us \cite[Thm. 14]{jacobson:some}:

\begin{theorem} If the characteristic of $\F$ is not two or three then $\der(\mfh_2(\OO))=\mathfrak{so}_9(\F)$ and $\der(\mfh_3(\OO))=\mathfrak{f}_4$.
\end{theorem}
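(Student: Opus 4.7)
My plan is to handle the two cases separately, in both exploiting that $\mfh_n(\OO)$ with the anticommutator is a simple Jordan algebra — a spin factor when $n=2$, the Albert algebra when $n=3$ — so that tools from Jordan theory apply. For $n=2$, I would recognise $\mfh_2(\OO)$ as a Jordan algebra of Clifford (spin-factor) type: writing each element as $\alpha I_2 + x$ with $\alpha\in\F$ and $x$ trace-zero, the trace-zero subspace $V$ is $9$-dimensional (one scalar plus one octonion), and a direct computation with $x=a(E_{11}-E_{22})+zE_{12}+\bar z E_{21}$ gives $x^2=(a^2+|z|^2)I_2$. Thus $q(x):=a^2+|z|^2$ is a non-degenerate quadratic form exhibiting $\mfh_2(\OO)$ as the Clifford Jordan algebra $J(V,q)=\F\oplus V$. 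Applying a derivation $D$ to $x\circ x=2q(x)I_2$ and splitting into $\F$- and $V$-components shows $DI_2=0$, that $D$ preserves $V$, and that $b(Dx,x)=0$ where $b$ polarises $q$; hence $D|_V\in\mathfrak{so}(V,q)$. Conversely, every element of $\mathfrak{so}(V,q)$ extends to a derivation by acting trivially on scalars, giving $\der(\mfh_2(\OO))\cong\mathfrak{so}_9(\F)$.

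For $n=3$, where $\mfh_3(\OO)$ is the Albert algebra, I would exhibit derivations from two natural sources: (i) $\mfg=\der(\OO)$ acting entrywise; and (ii) inner derivations $[L_a,L_b]$ of Jordan-multiplication operators, which are derivations for any Jordan algebra. A dimension count (inner derivations yielding the $52-14=38$ dimensions complementary to $\mfg$), together with simplicity of the resulting Lie algebra, would then identify it with $\mathfrak{f}_4$, for instance by matching the root system or computing the Killing form.

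The main obstacle, particularly for $n=3$, is the rigidity step of showing these are \emph{all} the derivations. This would require analysing how a general derivation interacts with the Peirce decomposition associated to the idempotents $E_{ii}$, or invoking the structure theory for derivations of central simple exceptional Jordan algebras. The exclusion of characteristic $3$ is essential at precisely this step: factors of $3$ appear in the Jordan identities and in the representation-theoretic input that controls outer derivations.
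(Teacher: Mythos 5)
The paper does not prove this statement at all: it is quoted from Jacobson \cite[Thm.~14]{jacobson:some}, so there is no internal argument to compare against, and you are attempting strictly more than the paper does. Your $n=2$ half is essentially complete and correct: the computation $x^2=(a^2+|z|^2)I_2$ on the trace-zero part does exhibit $\mfh_2(\OO)$ as the Jordan algebra of a nondegenerate $9$-dimensional quadratic form, $D(I_2)=0$ follows from applying $D$ to $I_2\circ I_2=2I_2$ (char $\neq2$), and the splitting argument then pins $D|_V$ into $\mathfrak{so}(V,q)$, with the converse extension immediate. (Two small caveats: to get $D(I_2)=0$ you should apply $D$ to $I_2\circ I_2$ rather than to $x\circ x$ for $x\in V$, which by itself only gives $q(x)D(I_2)=D(x)\circ x$; and what you obtain is $\mathfrak{so}(V,q)$ for $q=\langle1\rangle\perp N_\OO$, which matches the paper's "antisymmetric matrices" definition of $\mathfrak{so}_9(\F)$ only after identifying $q$ with the standard form -- harmless, but worth a sentence.)

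The $n=3$ half, however, has a genuine gap, and it is exactly the step that constitutes the theorem. Exhibiting $\mfg$ acting entrywise together with inner derivations $[L_a,L_b]$, and asserting that these span a $52$-dimensional simple Lie algebra, establishes at best a lower bound; neither the claimed dimension count ($52-14=38$ complementary dimensions from inner derivations) nor the simplicity of the span is verified, and even granting both, the crucial direction -- that \emph{every} derivation of $\mfh_3(\OO)$ lies in this span -- is deferred to "analysing the Peirce decomposition or invoking the structure theory for derivations of central simple exceptional Jordan algebras." That structure theory \emph{is} the Chevalley--Schafer/Jacobson result being proved, so as written the completeness step is circular rather than merely sketched. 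To close it honestly you would need to carry out the Peirce-decomposition analysis (the analogue of what the paper does by hand for $n\geq4$ in Proposition \ref{htraceless} and Theorem \ref{hermitianderivations}, where incidentally the triality argument genuinely needs four indices and so does not cover $n=3$), or else openly cite Jacobson as the paper does. The remark that characteristic $3$ enters only at this rigidity step is plausible but likewise unsubstantiated in the proposal.
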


This result extends earlier work of Chevalley and Schafer over algebraically closed fields of characteristic zero \cite{chevalleyschafer:exceptional}, and is the main motivation for the present work.

We henceforth consider $n\geq4$ only in this section, and allow the characteristic of $\F$ to be three. There are five types of nonzero product in $\mfh_n(\OO)$:
\begin{gather}
E_{ii}\circ E_{ii}=2E_{ii} \label{hiiii} \\
E_{ii}\circ (zE_{ij}+\bar{z}E_{ji}) =zE_{ij}+\bar{z}E_{ji} \label{hiiij} \\
E_{jj}\circ (zE_{ij}+\bar{z}E_{ji}) =zE_{ij}+\bar{z}E_{ji} \label{hjjij} \\
(zE_{ij}+\bar{z}E_{ji})\circ(wE_{ij}+\bar{w}E_{ji}) 
	=2\real(z\bar{w})(E_{ii}+E_{jj}) \label{hijij} \\
(zE_{ij}+\bar{z}E_{ji})\circ(wE_{jk}+\bar{w}E_{kj}) 
	=zwE_{ik}+\bar{w}\bar{z}E_{ki}. \label{hijjk}
\end{gather}

By applying a derivation $\partial$ to these we can obtain constraints that $\partial$ must satisfy. We first do this for a special subset of derivations.

\begin{proposition} \label{htraceless}
If $n\geq4$ then the subalgebra of derivations $\partial\vcentcolon\mfh_n(\OO)\rightarrow\mfh_n(\OO)$ such that $\partial(E_{ii})=0$ for all~$i$ is isomorphic to $\mathfrak{g}_2$.
\end{proposition}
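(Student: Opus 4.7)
My plan is to show that every derivation $\partial$ of $\mfh_n(\OO)$ satisfying $\partial(E_{ii})=0$ for all $i$ arises as the entrywise action of some derivation of $\OO$. Writing $[z]_{ij}:=zE_{ij}+\bar zE_{ji}$, I first note that relations \eqref{hiiij} and \eqref{hjjij}, combined with $\partial(E_{ii})=0$, force $\partial([z]_{ij})$ into the simultaneous $1$-eigenspace of the operators $E_{ii}\circ$ and $E_{jj}\circ$. Computing these eigenspaces directly (splitting $\mfh_n(\OO)$ according to which $E_{kk},[x]_{kl}$ are hit) shows that this simultaneous $1$-eigenspace is exactly $\{[w]_{ij}:w\in\OO\}$. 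Hence there exist $\F$-linear maps $f_{ij}:\OO\to\OO$, automatically satisfying $f_{ji}(\bar z)=\overline{f_{ij}(z)}$, such that $\partial([z]_{ij})=[f_{ij}(z)]_{ij}$.

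Next I would apply $\partial$ to relations \eqref{hijij} and \eqref{hijjk} to derive
\[
\real(f_{ij}(z)\bar w)+\real(f_{ij}(w)\bar z)=0, \qquad f_{ij}(z)w+zf_{jk}(w)=f_{ik}(zw),
\]
the second valid for pairwise distinct $i,j,k$. Specialising the first at $z=w=1$ puts $\alpha_{ij}:=f_{ij}(1)\in\pure(\OO)$. Specialising the second at $w=1$, at $z=1$, and at $z=w=1$ yields respectively $f_{ij}(z)=f_{ik}(z)-z\alpha_{jk}$, $f_{jk}(w)=f_{ik}(w)-\alpha_{ij}w$, and the cocycle identity $\alpha_{ij}+\alpha_{jk}=\alpha_{ik}$ (so also $\alpha_{ji}=-\alpha_{ij}$).

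The hypothesis $n\ge4$ enters at the key step. Fix $i\ne k$ and pick two distinct $j_1,j_2\in\{1,\dots,n\}\setminus\{i,k\}$, which is possible precisely because $n\ge4$. Subtracting the two instances of the second relation with $j=j_1$ and $j=j_2$, and using the formulas and cocycle identity to simplify the differences $f_{ij_1}(z)-f_{ij_2}(z)=z\alpha_{j_2j_1}$ and $f_{j_1k}(w)-f_{j_2k}(w)=\alpha_{j_1j_2}w$, collapses everything into the associator equation
\[
-[z,\alpha_{j_1j_2},w]=(z\alpha_{j_2j_1})w+z(\alpha_{j_1j_2}w)=0
\]
for every $z,w\in\OO$ (using $\alpha_{j_2j_1}=-\alpha_{j_1j_2}$ and the definition $[a,b,c]:=(ab)c-a(bc)$). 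Therefore $\alpha_{j_1j_2}$ lies in the nucleus of $\OO$, which is $\F$; as also $\alpha_{j_1j_2}\in\pure(\OO)$, we conclude $\alpha_{j_1j_2}=0$. Varying $i,k$, this gives $\alpha_{ij}=0$ for every $i\ne j$, and so $f_{ij}$ is independent of its indices; call this common map $D$. The second relation then becomes $D(z)w+zD(w)=D(zw)$, so $D\in\der(\OO)=\mfg$.

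Conversely, each $D\in\mfg$ acts entrywise as a derivation of $\mfh_n(\OO)$ killing all $E_{ii}$, and this assignment is evidently injective, giving the required isomorphism. The main obstacle is the non-associativity of $\OO$ in the third step: one must be scrupulous about parentheses when subtracting the two specialisations of the main relation, since a careless identification like $(z\alpha)w=z(\alpha w)$ loses the associator precisely where $n\ge4$ is needed.
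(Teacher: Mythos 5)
Your argument is correct, and for the heart of the proof it takes a genuinely different route from the paper. The first reduction is the same: using \eqref{hiiij}, \eqref{hjjij} and $\partial(E_{ii})=0$ you confine $\partial(zE_{ij}+\bar zE_{ji})$ to the $ij$-block, getting maps $f_{ij}$ satisfying the analogues of \eqref{realalphazw} and \eqref{hprederivation}. From there the paper first proves $\alpha^{ij}\in\mathfrak{so}_8(\F)$ (which requires the Type~I/Type~II case analysis around \eqref{TypeIImatrix}) and then invokes the triality principle -- that a triple related by \eqref{hprederivation} with all members in $\mathfrak{so}_8(\F)$ is determined by any one of them -- juggling indices $1,2,3,4$ to force all $\alpha^{ij}$ to coincide. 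You instead extract the scalars $\alpha_{ij}=f_{ij}(1)$, use \eqref{realalphazw} only to see they are purely imaginary, and kill them by a cocycle-plus-nucleus argument: subtracting two instances of the triality relation over distinct middle indices $j_1,j_2$ (this is exactly where $n\geq 4$ enters, matching the paper) yields $[z,\alpha_{j_1j_2},w]=0$ for all $z,w$, so $\alpha_{j_1j_2}\in\F\cap\pure(\OO)=0$, after which all $f_{ij}$ coincide and are derivations of $\OO$. Your version is more elementary and self-contained: it avoids citing the triality principle and dispenses with the $\mathfrak{so}_8$ membership and the Type~I/II analysis altogether; in fact it is closer in spirit to the nucleus tricks the paper itself uses later (for the $\mu^i_{ij}$ in Theorem \ref{hermitianderivations} and for $\beta^{tr}_{tr}(1)$ in the antihermitian case). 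The paper's route, by contrast, exposes the $\mathfrak{so}_8$/triality structure of the $\alpha^{ij}$, which is of independent interest. Two small points of bookkeeping you should make explicit: $\alpha_{ji}=-\alpha_{ij}$ follows either from $f_{ji}(\bar z)=\overline{f_{ij}(z)}$ at $z=1$ together with pure imaginarity, or from two instances of the cocycle identity (it is not an instance of the cocycle identity itself, since $\alpha_{ii}$ is undefined); and in the final step it is the pair $(j_1,j_2)$ that should range over all pairs of distinct indices, with $i,k$ chosen in the complement, which is possible precisely because $n\geq4$.
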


\begin{proof}
From \eqref{hiiij} and \eqref{hjjij} we have 
\[
E_{ii}\circ\partial(zE_{ij}+\bar{z}E_{ji}) =\partial(zE_{ij}+\bar{z}E_{ji}) =E_{jj}\circ\partial(zE_{ij}+\bar{z}E_{ji}),
\]
and hence there are linear maps $\alpha^{ij}\vcentcolon\OO\rightarrow\OO$ such that
\begin{align*}
\partial(zE_{ij}+\bar{z}E_{ji})=\alpha^{ij}(z)E_{ij}+\overline{\alpha^{ij}(z)}E_{ji}.
\end{align*}
Since $\partial(E_{ii})=0$ for all $i$, the $\alpha^{ij}$ determine $\partial$. Note in particular that 
\begin{align}
\alpha^{ij}(z)=\overline{\alpha^{ji}(\bar{z})}. \label{alphaijalphaji}
\end{align} 
Applying $\partial$ to \eqref{hijij}, we find in the $ii^\mathrm{th}$ place the equality 
\[
\alpha^{ij}(z)\bar{w} +w\overline{\alpha^{ij}(z)} +z\overline{\alpha^{ij}(w)} +\alpha^{ij}(w)\bar{z}=0,
\] 
which we can restate as
\begin{align}
\real(\alpha^{ij}(z)\bar{w})+\real(\alpha^{ij}(w)\bar{z})=0. \label{realalphazw}
\end{align}
Letting $z=w$ run through the standard basis of $\OO$ in \eqref{realalphazw} we get that both the real part of $\alpha^{ij}(1)$ and the $e_k^\mathrm{th}$ part of $\alpha^{ij}(e_k)$ are zero. By taking $z=e_k\neq e_l=w$ in \eqref{realalphazw} we get that the $e_k^\mathrm{th}$ part of $\alpha^{ij}(e_l)$ is the negative of the $e_l^\mathrm{th}$ part of $\alpha^{ij}(e_k)$. With $z=1$, $w=e_k$ in \eqref{realalphazw} we get two cases. If $\OO$ is Type~I then  the $e_k^\mathrm{th}$ part of $\alpha^{ij}(1)$ is the negative of the real part of either $\alpha^{ij}(e_k)$, and hence $\alpha^{ij}\in\mathfrak{so}_8(\F)$. On the other hand, if $\OO$ is Type~II then the $e_k^\mathrm{th}$ part of $\alpha^{ij}(1)$ is equal to the real part of either $\alpha^{ij}(e_k)$ or its negative, depending on whether $k>3$ or $k\leq3$, respectively. Thus the matrix of $\alpha^{ij}$ has the form 
\begin{align}
\left( \begin{array}{c|cc}
	0 & -v_1^T & v_2^T \\ \hline v_1 & & \\
	v_2 & \multicolumn{2}{c}{\smash{\raisebox{.4\normalbaselineskip}{$A$}}}   \end{array} \right) \in \F^{8\times8}, \label{TypeIImatrix}
\end{align} 
where $v_1\in\F^3$, $v_2\in\F^4$, and $A\in\mathfrak{so}_7(\F)$. We now apply $\partial$ to \eqref{hijjk}, and find in the $ik^\mathrm{th}$ place the equality 
\begin{align}
\alpha^{ik}(zw)=\alpha^{ij}(z)w+z\alpha^{jk}(w). \label{hprederivation}
\end{align}
In particular, if $e_t\neq e_r$ then  
\[
\alpha^{ik}(e_te_r)=\alpha^{ij}(e_t)e_r+e_t\alpha^{jk}(e_r) \hspace{5mm} \mathrm{and} \hspace{5mm} \alpha^{ij}(e_te_r)=\alpha^{ik}(e_t)e_r+e_t\alpha^{kj}(e_r).
\]
Comparing $e_t^\mathrm{th}$ parts, it follows from the form \eqref{TypeIImatrix} of $\alpha^{ij}$ that if $\OO$ is Type~II and $r>3$ then $\real(\alpha^{jk}(e_r))=\real(\alpha^{kj}(e_r))$, and so is zero by \eqref{alphaijalphaji}. In particular, irrespective of whether $\OO$ is Type~I or Type~II, we have 
\begin{align}
\alpha^{ij}\in\mathfrak{so}_8(\F). \label{soeight}
\end{align}

Returning to \eqref{hprederivation}, the maps $\alpha^{ik}, \alpha^{ij}, \alpha^{jk}$ are said to be in triality, and in light of \eqref{soeight}, any one uniquely determines the other two \cite[p.42]{springerveldkamp:octonions}. We use these trialities to show that all $\alpha^{ij}$ are equal. 

If $j>2$ then we have trialities $\alpha^{12},\alpha^{1j},\alpha^{j2}$. Since these share the same first map we have $\alpha^{1j}=\alpha^{13}$ and $\alpha^{2j}=\alpha^{23}$ whenever $j>2$.

If $k>j>2$ then the trialities $\alpha^{1j},\alpha^{1k},\alpha^{kj}$ share the same first map, so all $\alpha^{jk}$ with $k>j>2$ are equal to $\alpha^{34}$.

The two trialities $\alpha^{12},\alpha^{14},\alpha^{42}$ and $\alpha^{13},\alpha^{14},\alpha^{43}$ share the same second map, so $\alpha^{12}=\alpha^{13}$ and $\alpha^{24}=\alpha^{34}$. Hence if $k>j>1$ then all $\alpha^{1j}$ are equal to $\alpha^{12}$, and all $\alpha^{jk}$ are equal to~$\alpha^{23}$.

Finally, the two trialities $\alpha^{13},\alpha^{14},\alpha^{43}$ and $\alpha^{23},\alpha^{24},\alpha^{43}$ share the same third map, so $\alpha^{14}=\alpha^{24}$. Hence all the $\alpha^{ij}$ are equal to $\alpha^{12}$. Writing $\alpha=\alpha^{12}$, we can now read \eqref{hprederivation} as
\begin{align*}
\alpha(zw)=\alpha(z)w+z\alpha(w).
\end{align*}
That is, $\alpha\in\der\OO=\mfg$, and $\partial$ is given by applying $\alpha$ to each entry.
\end{proof}

\begin{theorem} \label{hermitianderivations}
If $n\geq4$ and the characteristic of $\F$ is not two, then $\der(\mfh_n(\OO))=\mfg\oplus\mathfrak{so}_n(\F)$.
\end{theorem}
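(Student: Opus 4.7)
By Lemma \ref{g2son} we already have $\mfg\oplus\mathfrak{so}_n(\F)\subseteq\der(\mfh_n(\OO))$, so the plan is to prove the reverse inclusion: any $\partial\in\der(\mfh_n(\OO))$ is the sum of an element of $\mfg$ (acting entrywise) and an inner derivation $\adjoint_A$ for some $A\in\mathfrak{so}_n(\F)$.

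First, I would apply $\partial$ to \eqref{hiiii} and to $E_{ii}\circ E_{jj}=0$ to pin down the structure of $\partial(E_{ii})$. From \eqref{hiiii}, the identity $E_{ii}\circ\partial(E_{ii})=\partial(E_{ii})$ forces all diagonal entries of $\partial(E_{ii})$ to vanish and restricts the nonzero off-diagonal entries to the $i^\mathrm{th}$ row and column, giving
\[
\partial(E_{ii})=\sum_{j\neq i}\bigl(a^i_j E_{ij}+\overline{a^i_j}\,E_{ji}\bigr)
\]
for some octonions $a^i_j$. From $E_{ii}\circ E_{jj}=0$, computing the contribution at positions $(i,j)$ and $(j,i)$ yields $a^i_j+\overline{a^j_i}=0$; in particular the real parts satisfy $\real(a^i_j)+\real(a^j_i)=0$.

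Defining $A\in\mathfrak{so}_n(\F)$ by $A_{ij}=-\real(a^i_j)$, a direct computation of $\adjoint_A(E_{ii})$ shows that replacing $\partial$ with $\partial-\adjoint_A$ produces a derivation whose analogous decomposition has coefficients lying in $\pure(\OO)$. Thus we may assume $a^i_j\in\pure(\OO)$ for all $i\neq j$; note that in this setup, $\overline{a^i_j}=-a^i_j$ and the relation from Step~2 becomes $a^i_j=a^j_i$.

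It remains to show that in this reduced setting all $a^i_j$ vanish, for then Proposition \ref{htraceless} gives $\partial\in\mfg$ and the theorem follows. My plan for this final step is to apply $\partial$ to \eqref{hiiij} (and its analog centred at $E_{jj}$) to determine $\partial(zE_{ij}+\overline{z}E_{ji})$ entry-by-entry in terms of the $a^i_k$'s, leaving only the $(i,j)$-component undetermined; then apply $\partial$ to \eqref{hijjk} and project onto diagonal positions $E_{ll}$, which should produce multilinear identities in $\OO$ whose only solutions with $a^i_j\in\pure(\OO)$ are zero. The main obstacle will be extracting clean conclusions from these identities: because $\OO$ is non-associative, rearranging the resulting octonion products requires the Moufang law \eqref{moufanglaw}, and because the analogous statement fails for $n=2,3$, the argument must genuinely exploit the existence of a fourth index $l\notin\{i,j,k\}$ in order to separate off contributions that would otherwise obscure the vanishing.
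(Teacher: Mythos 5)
Your reduction steps are sound and essentially reorder the paper's argument: the paper first proves every coefficient $\mu^i_{ij}$ (your $a^i_j$) lies in $\F$ and then subtracts $\adjoint_A$, whereas you subtract the real parts first and must then kill the purely imaginary remainders; either order is fine, and your computation of $\partial(E_{ii})$, the relation $a^i_j+\overline{a^j_i}=0$, and the appeal to Proposition \ref{htraceless} at the end are all correct. The gap is in the tool you propose for the decisive step. Projecting $\partial$ applied to \eqref{hijjk} onto diagonal positions gives nothing beyond what you already know: once $\partial(zE_{ij}+\bar zE_{ji})$ is written out entrywise using \eqref{hiiij} and \eqref{hjjij}, the $(i,i)$-, $(j,j)$- and $(k,k)$-entries of the differentiated identity reduce, via $\real(ab)=\real(ba)$ and the associativity of the trace form $\real\bigl((ab)c\bigr)=\real\bigl(a(bc)\bigr)$, to $\real\bigl((zw)(a^k_i+\overline{a^i_k})\bigr)=0$ or to tautologies, while the $(l,l)$-entries with $l\notin\{i,j,k\}$ vanish identically on both sides. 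So the diagonal projections only reproduce \eqref{muantisymmetry} and cannot see the imaginary parts at all. This is forced: those projections involve only the three indices $i,j,k$, so any vanishing they implied would equally apply when $n=3$, contradicting $\der(\mfh_3(\OO))=\mathfrak{f}_4$, whose dimension exceeds $\dim(\mfg\oplus\mathfrak{so}_3(\F))$.

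The information you need sits in the off-diagonal entries involving the fourth index, which is exactly where the paper's proof goes: for $l\notin\{i,j,k\}$, the $(i,l)$-entry of $\partial$ applied to \eqref{hijjk} reads $(zw)a^k_l=z(w\,a^k_l)$ for all $z,w\in\OO$. Hence $a^k_l$ lies in the nucleus of $\OO$, which is $\F$; in your reduced setting $a^k_l\in\pure(\OO)$, so $a^k_l=0$, and varying $i,j,k,l$ kills all the coefficients, after which Proposition \ref{htraceless} finishes the argument as you intended. Note also that the Moufang law \eqref{moufanglaw} is not the relevant nonassociative input here; the only fact needed is that the nucleus of $\OO$ equals $\F$.
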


\begin{proof}
Let $\partial$ be a derivation. Our strategy is to show that $\partial$ differs from one of the derivations of Proposition \ref{htraceless} by the adjoint action of an element of $\mathfrak{so}_n(\F)$. Applying $\partial$ to \eqref{hiiii} we find that there are constants $\mu_{ik}^i\in\OO$ for $k\neq i$ such that
\begin{align*}
\partial(E_{ii})=\sum_{k\neq i} (\mu_{ik}^iE_{ik}+\overline{\mu_{ik}^i}E_{ki}),
\end{align*}
and $\partial$ applied to $E_{ii}\circ E_{jj}=0$ yields
\begin{align}
\mu_{ij}^i=-\overline{\mu_{ji}^j}. \label{muantisymmetry}
\end{align}
The $\partial(E_{ii})$ are thus determined by the choice of $\mu_{ij}^i$ with $j>i$. Now let 
\[
\partial(zE_{ij}+\bar{z}E_{ji})=\sum_{k,l}\alpha_{kl}^{ij}(z)E_{kl}.
\] 
Similarly to in the proof of Proposition \ref{htraceless}, applying $\partial$ to \eqref{hiiij} and \eqref{hjjij} leads to
\begin{align*}
\partial(zE_{ij}+\bar{z}E_{ji})
	=2E_{jj}&\real(z\mu_{ij}^i)-2E_{ii}\real(\mu_{ij}^i\bar{z}) 
		+\alpha_{ij}^{ij}(z)E_{ij}+\overline{\alpha_{ij}^{ij}(z)}E_{ji}	\\
		+&\sum_{t\neq i,j}\big( z\mu_{jt}^jE_{it}
		+\overline{\mu_{jt}^j}\bar{z}E_{ti}+\overline{\mu_{it}^i}zE_{tj}
		+\bar{z}\mu_{it}^iE_{jt}\big). \nonumber
\end{align*}
In particular, if the pair $(k,l)$ is not equal to either $(i,j)$ or $(j,i)$ then $\alpha_{kl}^{ij}=0$. We therefore abbreviate $\alpha_{ij}^{ij}$ to just $\alpha^{ij}$, and note that $\partial$ is determined by the $\mu_{ij}^i$ and the $\alpha^{ij}$ with $j>i$. Now apply $\partial$ to both sides of \eqref{hijjk}. If $t\neq i,j,k$ then in the $it^\mathrm{th}$ place we find the equality
\begin{align*}
(zw)\mu_{kt}^k=z(w\mu_{kt}^k).
\end{align*}
Thus $\mu_{kt}^k$ lies in the nucleus of $\OO$, which is $\F$. By varying $i,j,k$ we find that all $\mu_{ij}^i$ lie in~$\F$.

Let $A=(\mu_{ij}^i)_{ij}$. Since all $\mu_{ij}^i$ lie in $\F$, equation \eqref{muantisymmetry} tells us that $A\in\mathfrak{so}_n(\F)$, so $\adjoint_A\in\der(\mfh_n(\OO))$. Moreover, 
\[
\adjoint_A(E_{ii})=\sum_{k\neq i}(\mu_{ik}^iE_{ik}+\overline{\mu_{ik}^i}E_{ki})=\partial(E_{ii}).
\]
Hence $\partial-\adjoint_A$ is a derivation which maps all $E_{ii}$ to zero, so by Proposition \ref{htraceless}, $\partial-\adjoint_A$ is given by an element of $\mfg$, and by Lemma \ref{g2son} we are done.
\end{proof}

The exceptional Lie algebra $\mathfrak{e}_6$ can be constructed as 
\[
\mathfrak{e}_6=\der(\mfh_3(\OO))+\{L_x\vcc x\in\mfh_3(\OO), \hspace{1mm} \mathrm{tr}(x)=0\},
\]
where $L_x$ denotes left multiplication by $x$. This is due to Chevalley and Schafer \cite{chevalleyschafer:exceptional} (see also \cite[Sect. 4.4]{schafer:introduction}). A natural question therefore arises from Theorem~\ref{hermitianderivations}:
\begin{question}
How does this construction of $\mathfrak{e}_6$ generalise to $\mfh_n(\OO)$?
\end{question}
One barrier to generalisation is that the commutator of two left multiplications may fail to be a derivation, for while in the $3\times3$ case the derivation algebra has dimension $\dim(\mathfrak{f}_4)=52$, in the $4\times4$ case its dimension is only $\dim(\mfg\oplus\mathfrak{so}_4(\F))=20$. One remedy would be to include products of multiplication maps, and some work in this vein is done in \cite{petyt:special}.

\section{Antihermitian}

Here we compute the algebras $\der(\mfa_n(\OO))$ for all $n$. We find this to be more fiddly than the hermitian case. Again there are five types of nonzero product in $\mfa_n(\OO)$:
\begin{gather}
[e_iE_{tt},e_jE_{tt}]=2e_ie_jE_{tt} \hspace{5mm} i\neq j \label{atttt} \\
[e_iE_{tt},zE_{tr}-\bar{z}E_{rt}]=e_izE_{tr}+\bar{z}e_iE_{rt} \label{atttr} \\
[e_iE_{rr},zE_{tr}-\bar{z}E_{rt}]=-ze_iE_{tr}-e_i\bar{z}E_{rt} \label{arrtr} \\
[zE_{tr}-\bar{z}E_{rt},wE_{tr}-\bar{w}E_{rt}]= 2\imag(w\bar{z})E_{tt}+2\imag(\bar{w}z)E_{rr} \label{atrtr} \\
[zE_{tr}-\bar{z}E_{rt},wE_{rs}-\bar{w}E_{sr}]=zwE_{ts}-\bar{w}\bar{z}E_{st}. \nonumber
\end{gather}
and again we find restrictions on a derivation by applying it to (the first four of) these.

\begin{theorem}
If the characteristic of $\F$ is not two, then $\der(\mfa_n(\OO))=\mfg\oplus\mathfrak{so}_n(\F)$ for all natural numbers $n$.
\end{theorem}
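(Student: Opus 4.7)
The case $n=1$ is recorded in the set-up, and by Lemma~\ref{g2son} the inclusion $\mfg\oplus\mathfrak{so}_n(\F)\subseteq\der(\mfa_n(\OO))$ holds for every $n$. For $n\geq 2$ the plan is to mirror the hermitian strategy: first identify the subalgebra of derivations behaving trivially on the ``diagonal'' with $\mfg$, in analogy with Proposition~\ref{htraceless}, and then show that an arbitrary derivation differs from one of these by the adjoint action of a suitable element of $\mathfrak{so}_n(\F)$.

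The subtlety compared with the hermitian case is that the diagonal entries of an antihermitian matrix lie in $\pure(\OO)$ rather than $\F$, so ``trivial on the diagonal'' needs interpretation. My first step is therefore to decompose $\partial(e_iE_{tt})$ as a diagonal piece (with entries in $\pure(\OO)$) plus an off-diagonal piece, and to decompose $\partial(zE_{tr}-\bar{z}E_{rt})$ analogously. Feeding \eqref{atttr} and \eqref{arrtr} into $\partial$ plays the role of \eqref{hiiij} and \eqref{hjjij}: it should pin the off-diagonal part of $\partial(zE_{tr}-\bar{z}E_{rt})$ to be supported in positions $(t,r)$ and $(r,t)$, producing linear maps $\alpha^{tr}\vcentcolon\OO\rightarrow\OO$. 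Applying $\partial$ to \eqref{atrtr} then links each $\alpha^{tr}$ to the diagonal pieces of $\partial(e_tE_{tt})$ and $\partial(e_rE_{rr})$.

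Next I would transplant the nuclear argument from Theorem~\ref{hermitianderivations}: applied to $\partial$ of the fifth relation, it should force the off-diagonal octonion coefficients appearing in $\partial(e_iE_{tt})$ to lie in $\F$, hence to assemble into a single $A\in\mathfrak{so}_n(\F)$ whose $\adjoint_A$ accounts for the whole off-diagonal behaviour on diagonal basis elements. After subtracting $\adjoint_A$, the resulting derivation sends each $e_iE_{tt}$ to a purely diagonal antihermitian matrix. Here the genuinely new relation \eqref{atttt}, which has no analogue in the hermitian world, takes over: applied to $\partial$ it couples the diagonal pieces of $\partial(e_iE_{tt})$, $\partial(e_jE_{tt})$, and $\partial(e_ie_jE_{tt})$, and combined with the constraints already obtained from \eqref{atttr} this should force the diagonal of $\partial(e_iE_{tt})$ to be supported only at position $t$ and, as $t$ varies, to be given by a single derivation $D\in\mfg$ applied to $e_i$. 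Subtracting $D$ leaves a derivation killing every $e_iE_{tt}$, at which point the triality reasoning from Proposition~\ref{htraceless} applied to the fifth relation identifies what remains with an element of $\mfg$, which by construction must vanish.

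The main obstacle I expect is extracting the right consequences of \eqref{atttt}: since the commutator introduces the product $e_ie_j$ on the right-hand side, this relation mixes basis elements of $\pure(\OO)$ in ways governed by identities such as \eqref{bimultiplication}, and one has to manage the resulting sign bookkeeping carefully to see that the diagonal assignments really do collapse to a single derivation of $\OO$, rather than leaving behind a $t$-dependent family of derivations. Once this collapse is established, the triality step runs in parallel with the hermitian proof, so the bulk of the work is concentrated in controlling the new $\pure(\OO)$-valued diagonal freedom.
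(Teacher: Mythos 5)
Your overall architecture (subtract an $\adjoint_A$, then an element of $\mfg$, then show the remainder vanishes) is reasonable, and it does differ from the paper, which instead proves the theorem by bounding $\dim\der(\mfa_n(\OO))$ above by $14+\tfrac{n(n-1)}{2}$ and invoking Lemma~\ref{g2son}. But there is a genuine gap at the step you treat as routine. The off-diagonal data of $\partial(e_iE_{tt})$ is not a collection of constants $\mu\in\OO$ as in the hermitian case; it is a family of \emph{linear maps} $a_{tk}^t\vcentcolon\pure(\OO)\to\OO$, $e_i\mapsto$ (entry of $\partial(e_iE_{tt})$ in position $tk$). Your claim that a transplanted nuclearity argument forces these ``coefficients to lie in $\F$'' is false as stated: already for $\partial=\adjoint_A$ with $A=(\lambda_{tk})\in\mathfrak{so}_n(\F)$ the entry in position $tk$ is $\mp\lambda_{tk}e_i$, which is purely imaginary. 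What must actually be proved is that each $a_{tk}^t$ is a \emph{scalar multiple of the identity map} on $\pure(\OO)$, with the scalar independent of $i$. This is Claim~1 of the paper and is the technical heart of the antihermitian proof: it is extracted from \eqref{atttt} (via \eqref{aij}) together with \eqref{atttr} (via \eqref{tbrr}), and needs the left Moufang law \eqref{moufanglaw}, an associator/subalgebra argument, and a separate treatment of the Type~I and Type~II cases. Nothing in the hermitian nuclearity computation (where $(zw)\mu=z(w\mu)$ puts a fixed octonion in the nucleus) delivers a statement about linear maps being multiples of the identity, so your plan is missing its key lemma; relatedly, you locate the expected difficulty in collapsing the diagonal pieces to a single derivation, whereas that part (the paper's Claim~2) is comparatively soft once Claim~1 is in hand.

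A second, independent problem is index bookkeeping versus the range of $n$. Your nuclearity step is modelled on the hermitian argument that reads off position $it$ with $t\neq i,j,k$ (four distinct indices), and your final triality step, as in Proposition~\ref{htraceless}, also needs four indices; the fifth product relation itself needs three. The theorem, however, is asserted for \emph{all} $n$, including $n=2$ and $n=3$, and your outline gives no argument there. The paper deliberately uses only the first four product types, whose core consequences involve just two indices $t,r$, so its proof is uniform in $n\geq2$ (with $n=1$ handled in the set-up). To rescue your approach you would either have to supply the scalar-multiple-of-identity lemma by an argument using only two indices (essentially reproducing the paper's Claim~1) or give separate proofs for small $n$.
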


\begin{proof}
By Lemma \ref{g2son} it suffices to bound the dimension of $\der(\mfa_n(\OO))$ above by $14+\frac{n(n-1)}{2}$. Applying a derivation $\partial$ to both sides of \eqref{atttt}, we find that for $k\neq t$ there are linear maps $a_{tk}^t\vcentcolon\pure(\OO)\rightarrow\OO$ and $a_{tt}^t\vcentcolon\pure(\OO)\rightarrow\pure(\OO)$ such that 
\begin{align*}
\partial(e_iE_{tt})=a_{tt}^t(e_i)E_{tt}+ \sum_{k\neq t} \big(a_{tk}^t(e_i)E_{tk}-\overline{a_{tk}^t(e_i)}E_{kt}\big),
\end{align*}
and moreover, if $k\neq t$ and $i\neq j$ then 
\begin{align}
2a_{tk}^t(e_ie_j)=e_ia_{tk}^t(e_j)-e_ja_{tk}^t(e_i). \label{aij}
\end{align}
Applying $\partial$ to $[e_iE_{tt},e_iE_{rr}]=0$ we get that $a_{tr}^t(e_i)e_i+e_i\overline{a_{rt}^r(e_i)}=0$, and hence
\begin{align}
a_{rt}^r(e_i)=\pm e_i\overline{a_{tr}^t(e_i)}e_i. \label{artrtrt}
\end{align} 
Now let $\partial(zE_{tr}-\bar{z}E_{rt})=\sum\beta_{kl}^{tr}(z)E_{kl}$. Applying $\partial$ to \eqref{atttr}, we get in positions $rr$, $tr$, $tk$, $rk$, $kl$ (with $k,l,r,t$ pairwise distinct) the following respective equalities:
\begin{gather}
\beta_{rr}^{tr}(e_iz)=2\imag(\bar{z}a_{tr}^t(e_i)) \label{tbrr} \\
\beta_{tr}^{tr}(e_iz)=e_i\beta_{tr}^{tr}(z)+a_{tt}^t(e_i)z \label{tbtr} \\
\beta_{tk}^{tr}(e_iz)=e_i\beta_{tk}^{tr}(z) \label{tbtk} \\
\beta_{rk}^{tr}(e_iz)=\bar{z}a_{tk}^t(e_i) \label{tbrk} \\
\beta_{kl}^{tr}(e_iz)=0. \label{tbkl}
\end{gather}
\begin{claim}{} \label{alambda}
The $a_{tr}^t$ are scalar multiples of the identity map $I\vcentcolon \pure(\OO)\rightarrow\pure(\OO)$.
\end{claim}
\begin{claimproof}
Taking $z=e_i$ in \eqref{tbrr} gives $\beta_{rr}^{tr}(1)=\pm2\imag(e_ia_{tr}^t(e_i))$, depending on whether $\OO$ is Type~I or Type~II and on the value of $i$, and it follows that all but the $e_i^\mathrm{th}$ part of $a_{tr}^t(e_i)$ is determined by $a_{tr}^t(e_1)$. Comparing $(e_ie_j)^\mathrm{th}$ parts in \eqref{aij} we find that the $(e_ie_j)^\mathrm{th}$ part of $a_{tk}^t(e_ie_j)$ is half the sum of the $e_j^\mathrm{th}$ part of $a_{tk}^t(e_j)$ with the $e_i^\mathrm{th}$ part of $a_{tk}^t(e_i)$. Cycling $e_i\mapsto e_j\mapsto e_ie_j\mapsto e_i$, we get that the  $e_i^\mathrm{th}$ part of $a_{tk}^t(e_i)$ is the same for all~$i$.

Thus, if we set $c_{tr}^t=a_{tr}^t+\real(e_1a_{tr}^t(e_1))I$ then $c_{tr}^t$ is simply $a_{tr}^t$ except that the $e_i^\mathrm{th}$ part of $c_{tr}^t(e_i)$ is zero for all $i$, and by \eqref{tbrr} we have $2e_ic_{tr}^t(e_i)=\beta_{rr}^{tr}(1)$. In particular, 
\begin{align}
c_{tr}^t(e_j)=\left\{\begin{array}{@{}l@{}l}
\multirow{2}{*}{$-e_j(e_ic_{tr}^t(e_i))$} &\hspace{5mm} \text{if }\OO \text{ is Type I} \\
 &\hspace{7mm} \text{or }\OO \text{ is Type II and }j\leq3; \\[1mm]
e_j(e_ic_{tr}^t(e_i)) &\hspace{5mm} \text{if }\OO \text{ is Type II and }j>3.
\end{array}\right. \label{citoj}
\end{align}
Since $c_{tr}^t$ differs from $a_{tr}^t$ only by a multiple of the identity, \eqref{aij} holds for $c_{tr}^t$. If $\OO$ is Type~II and either $i\leq3<j$ or $j\leq3<i$, then combining \eqref{aij} with \eqref{citoj} gives
\[
2c_{tr}^t(e_ie_j)=e_ic_{tr}^t(e_j)-e_jc_{tr}^t(e_i)= \pm e_i(e_j(e_ic_{tr}^t(e_i)))-e_jc_{tr}^t(e_i)=0,
\]
where the last equality holds by the left Moufang law \eqref{moufanglaw} and equation \eqref{bimultiplication}. On the other hand, if $i,j\leq3$, $i,j>3$, or $\OO$ is Type~I then
\begin{align}
2c_{tr}^t(e_ie_j)=e_ic_{tr}^t(e_j)-e_jc_{tr}^t(e_i)
=-2e_i(e_j(e_ic_{tr}^t(e_i))) \label{typeiiandijparity}
\end{align}
But \eqref{citoj} tells us that $2c_{tr}^t(e_ie_j)=-2(e_ie_j)(e_ic_{tr}^t(e_i))$, so the associator $[e_i,e_j,e_ic_{tr}^t(e_i)]=0$, and hence $c_{tr}^t(e_i)$ lies in the subalgebra generated by $e_i$ and $e_j$. If $\OO$ is Type~I then this holds for all choices of $e_j$, so $c_{tr}^t(e_i)$ lies in the complex subalgebra generated by $e_i$. But we constructed $c_{tr}^t$ so that the $e_i^\mathrm{th}$ part of $c_{tr}^t(e_i)$ is zero, and hence $c_{tr}^t(e_i)\in\F$. Now comparing real parts in \eqref{aij}, we find that the real part of $c_{tr}^t(e_ie_j)$ is zero. Hence $c_{tr}^t=0$ if $\OO$ is Type~I. 

Similarly, if $\OO$ is Type~ II and $i,j,k>3$ then $c_{tr}^t(e_i)$ lies in both the subalgebra generated by $e_i$ and $e_j$ and in the subalgebra generated by $e_i$ and $e_k$, so it lies in the subalgebra generated by $e_i$, and hence is an element of $\F$. If $i\leq 3$ then we can partition $\{e_4,e_5,e_6,e_7\}$ into two pairs $e_{j_1}, e_{k_1}$ and $e_{k_2},e_{k_2}$ such that $e_i=e_{j_l}e_{k_l}$. Then by \eqref{typeiiandijparity}, $c_{tr}^t(e_i)=-e_{j_l}(e_{k_l}(e_{j_l}c_{tr}^t(e_{j_l})))$, which since $j_l>3$ lies in the $\F$-span of $e_{k_l}$ for both $l=1$ and $l=2$. Hence $c_{tr}^t=0$ if $\OO$ is Type~II as well. It follows from the construction of $c_{tr}^t$ that there exist constants $\lambda_{tr}^t\in\F$ such that
\begin{align}
a_{tr}^t=\lambda_{tr}^tI\vcc \pure(\OO)\longrightarrow\pure(\OO),
\end{align}
which proves the claim.
\end{claimproof}

Applying $\partial$ to \eqref{arrtr}, we get in positions $tt$, $tr$, $tk$, $rk$ (with $k\neq r,t$) the following respective equalities: 
\begin{gather}
\beta_{tt}^{tr}(ze_i)=-2\imag(\lambda_{tr}^te_i\bar{z}) \label{rbtt} \\
\beta_{tr}^{tr}(ze_i)=za_{rr}^r(e_i)+\beta_{tr}^{tr}(z)e_i \label{rbtr} \\
\beta_{tk}^{tr}(ze_i)=za_{rk}^r(e_i) \label{rbtk} \\
\beta_{rk}^{tr}(ze_i)=-e_i\beta_{rk}^{tr}(z). \label{rbrk} 
\end{gather}
Taking $z=1$ in \eqref{tbrk} and \eqref{rbtk} and using Claim 1 gives 
\begin{gather*}
\beta_{rk}^{tr}\vert_{\pure(\OO)}=a_{tk}^t=\lambda_{tk}^tI, \hspace{5mm}
\beta_{tk}^{tr}\vert_{\pure(\OO)}=a_{rk}^r=\lambda_{rk}^rI,
\end{gather*}
and then by taking $z=e_i$ in \eqref{tbtk} and \eqref{rbrk} we conclude that 
\begin{gather}
\beta_{tk}^{tr}=\lambda_{rk}^rI, \hspace{5mm}
\beta_{rk}^{tr}=\lambda_{tk}^tI. \label{brklambda}
\end{gather}
Now, taking $z=1$ in \eqref{tbrr}, \eqref{tbtr}, \eqref{rbtt}, \eqref{rbtr} and $z=e_i$ in \eqref{tbrr} and \eqref{rbtt} gives:
\begin{gather}
\beta_{rr}^{tr}(e_i)=2\lambda_{tr}^te_i, \hspace{4mm} \beta_{rr}^{tr}(1)=0 \label{brrlambda} \\
a_{tt}^t(e_i)=\beta_{tr}^{tr}(e_i)-e_i\beta_{tr}^{tr}(1) \label{attt} \\
\beta_{tt}^{tr}(e_i)=-2\lambda_{tr}^te_i, \hspace{4mm} \beta_{tt}^{tr}(1)=0 \label{bttlambda} \\
a_{rr}^r(e_i)=\beta_{tr}^{tr}(e_i)-\beta_{tr}^{tr}(1)e_i. \label{arrr}
\end{gather}
\begin{claim}{}
There is an element $\beta\in\mfg$ such that $a_{tt}^t=\beta_{tr}^{tr}=\beta$ for all $t$ and $r$.
\end{claim}
\begin{claimproof}
Applying $\partial$ to \eqref{atrtr}, in the $tt^\mathrm{th}$ place we find the equality
\[
2a_{tt}^t(e_i)=\beta_{tr}^{tr}(1)e_i+ e_i\overline{\beta_{tr}^{tr}(1)}+ \beta_{tr}^{tr}(e_i)-\overline{\beta_{tr}^{tr}(e_i)}.
\]
Combining this with \eqref{attt} gives 
\[
2\beta_{tr}^{tr}(e_i) -2e_i\beta_{tr}^{tr}(1) =\beta_{tr}^{tr}(1)e_i +e_i\overline{\beta_{tr}^{tr}(1)} +2\imag(\beta_{tr}^{tr}(e_i)),
\]
and comparing $e_i^\mathrm{th}$ parts we find that
\begin{align}
\real(\beta_{tr}^{tr}(1))=0. \label{realbtr}
\end{align}
Taking $z=e_i$ in \eqref{rbtr} and using \eqref{arrr} leads to 
\[
\pm\beta_{tr}^{tr}(1)=\beta_{tr}^{tr}(e_i)e_i +e_i\beta_{tr}^{tr}(e_i) -e_i\beta_{tr}^{tr}(1)e_i,
\]
and by \eqref{realbtr}, if we compare real parts in this then we get that the $e_i^\mathrm{th}$~part of $\beta_{tr}^{tr}(e_i)$ is zero. Combining \eqref{tbtr} with \eqref{attt} and \eqref{rbtr} with \eqref{arrr} we get, respectively:
\begin{gather}
\beta_{tr}^{tr}(e_ie_j) =e_i\beta_{tr}^{tr}(e_j)+\beta_{tr}^{tr}(e_i)e_j-(e_i\beta_{tr}^{tr}(1))e_j \label{beiej} \\
\beta_{tr}^{tr}(e_ie_j) =e_i\beta_{tr}^{tr}(e_j)-e_i(\beta_{tr}^{tr}(1)e_j)+\beta_{tr}^{tr}(e_i)e_j, \nonumber
\end{gather}
and hence $(e_i\beta_{tr}^{tr}(1))e_j=e_i(\beta_{tr}^{tr}(1)e_j)$ for all $i$ and $j$, so $\beta_{tr}^{tr}(1)$ is in the nucleus of $\OO$, which is $\F$. By \eqref{realbtr} we now have $\beta_{tr}^{tr}(1)=0$, and it follows from \eqref{beiej} that
\begin{align*}
\beta_{tr}^{tr}(e_ie_j)=e_i\beta_{tr}^{tr}(e_j)+\beta_{tr}^{tr}(e_i)e_j.
\end{align*}
That is $\beta_{tr}^{tr}\in\der(\OO)=\mfg$. Using \eqref{attt} and \eqref{arrr} then varying $r$ and $t$ completes the proof of the claim.
\end{claimproof}

Putting together \eqref{tbkl}, \eqref{brklambda}, \eqref{brrlambda}, \eqref{bttlambda} with Claims 1 and 2 we find that $\partial$ is completely determined by the choice of $\beta\in\mfg$ and the $\lambda_{tk}^t$ with $k\neq t$. By \eqref{artrtrt} we only need $k>t$, so $\dim(\der(\mfa_n(\OO)))\leq14+\frac{n(n-1)}{2}$, and by Lemma \ref{g2son} we are done.
\end{proof}

\bibliographystyle{alpha}
\bibliography{bibtex}

\end{document}